\newcommand{\h}{\hbox}
\newcommand{\q}{\quad}
\newcommand{\nin}{\noindent}
\newcommand{\sk}{\par\smallskip}
\newcommand{\skn}{\par\smallskip\noindent}
\newcommand{\ges}{\geqslant}
\newcommand{\les}{\leqslant}
\newcommand{\one}{\hskip1pt}
\newcommand{\msum}{\hbox{$\sum$}}
\newcommand{\mprod}{\hbox{$\prod$}}
\newcommand{\J}{{\mathcal J}}
\newcommand{\OO}{{\mathcal O}}
\newcommand{\Q}{{\mathbb Q}}
\newcommand{\C}{{\mathbb C}}
\newcommand{\N}{{\mathbb N}}
\newcommand{\R}{{\mathbb R}}
\newcommand{\Z}{{\mathbb Z}}
\newcommand{\xb}{{\mathbf x}}
\newcommand{\yb}{{\mathbf y}}
\newcommand{\Ut}{\widetilde{U}}
\newcommand{\Xt}{\widetilde{X}}
\newcommand{\chit}{\widetilde{\chi}}
\newcommand{\bo}{{\bf 1}}
\newcommand{\Gap}{\Gamma_{\!+}}
\newcommand{\al}{\alpha}
\newcommand{\be}{\beta}
\newcommand{\ga}{\gamma}
\newcommand{\si}{\sigma}
\newcommand{\ep}{\varepsilon}
\newcommand{\vp}{\varphi}
\newcommand{\dd}{\partial}
\newcommand{\ddd}{{\rm d}}
\newcommand{\tos}{\,{\to}\,}
\newcommand{\eq}{\,{=}\,}
\newcommand{\defs}{\,{:=}\,}
\newcommand{\nes}{\,{\ne}\,}
\newcommand{\ins}{\,{\in}\,}
\newcommand{\sst}{\,{\subset}\,}
\newcommand{\stm}{\,{\setminus}\,}
\newcommand{\gess}{\,{\ges}\,}
\newcommand{\less}{\,{\les}\,}
\newcommand{\sgt}{\,{>}\,}
\newcommand{\slt}{\,{<}\,}
\newcommand{\col}{\,{:}\,}
\newcommand{\pl}{\one {+}\one}
\newcommand{\mi}{\one {-}\one}
\newcommand{\bl}{\bigl}
\newcommand{\br}{\bigr}
\newcommand{\ssb}{\raise.15ex\h{${\scriptscriptstyle\bullet}$}}
\newcommand{\ssc}{\,\raise.15ex\h{${\scriptstyle\circ}$}\,}
\newcommand{\into}{\hookrightarrow}
\newcommand{\simto}{\,\,\rlap{\hskip1.5mm\raise1.4mm\hbox{$\sim$}}\hbox{$\longrightarrow$}\,\,}
\renewcommand\section{\@startsection{section}{1}{0pt}{-3ex plus -1ex minus -.2ex}{2.3ex plus.2ex}{\centering\normalfont\bfseries}}
\theoremstyle{plain}
\newtheorem{thm}{Theorem}[section]
\newtheorem{cor}{Corollary}[section]
\newtheorem{ithm}{Theorem}
\theoremstyle{definition}
\newtheorem{rem}{Remark}[section]
\begin{document}
\title[Subtlety of oscillation indices]
{Subtlety of oscillation indices of oscillatory integrals of real analytic functions}
\author[I.-K. Kim]{In-Kyun Kim}
\address{I.-K. Kim : June E Huh Center for Mathematical Challenges, Korea Institute for Advanced Study, 85 Hoegiro Dongdaemun-gu, Seoul 02455, Korea}
\email{soulcraw@kias.re.kr}
\author[M. Saito]{Morihiko Saito}
\address{M. Saito : RIMS Kyoto University, Kyoto 606-8502 Japan}
\email{msaito@kurims.kyoto-u.ac.jp}
\thanks{This work was partially supported by National Research Foundation of Korea NRF-2023R1A2C1003390 and NRF-2022M3C1C8094326.}
\begin{abstract} For a locally defined real analytic function $f$, we study the relation between the oscillation index of oscillatory integrals and the real log canonical threshold. The former is always negative, and its absolute value is greater than or equal to the latter. They coincide very often, but there are certain exceptional cases, and it is not very clear when the equality holds. In this note we give some sufficient conditions for the coincidence to hold or to fail. In the Newton-nondegenerate convenient homogeneous case, we show that the strict inequality holds if the number of variables $n$ is even and smaller than the degree $d$ of $f$ (or $f^{-1}(0)=\{0\}$), and the equality holds if $n$ is odd and $f^{-1}(0)=\{0\}$ (in particular, $d$ is even). The first assertion does not seem to be compatible with some standard formula in the literature, and there must be some error somewhere, although it does not seem easy to detect it inside this paper.
\end{abstract}
\maketitle

\section*{Introduction} \label{intr}
\nin
Let $f$ be a real analytic function defined on a small open neighborhood $U$ of $0$ in $X\defs\R^n$ with $f(0)\eq0$ and $n\gess2$. For $C^{\infty}$ functions $\vp$ on $U$ with compact supports, we can consider the {\it oscillatory integrals}
\begin{equation} \label{1}
I(\tau,\vp)\defs\int_Xe^{\sqrt{-1}\,\tau f(x)}\vp(x)\ddd\xb\q\q\h{for}\,\,\,\tau\ins\R_{>0},
\end{equation}
where $\ddd\xb\defs\ddd x_1{\wedge}\cdots{\wedge}\ddd x_n$ with $\xb\eq(x_1,\dots,x_n)$ the canonical coordinate system of $\R^n$. As a consequence of Hironaka's resolution of singularities (see \cite{H}), we have the {\it asymptotic expansions\one} for $\tau\to+\infty$\,:
\begin{equation} \label{2}
I(\tau,\vp)\sim\sum_{\al<0}\sum_{k=0}^{n-1}C_{\al,k}(\vp)\tau^{\al}(\log\tau)^k\q\q\h{with}\,\,\,\,C_{\al,k}(\vp)\ins\C,
\end{equation}
 see \cite{M}, \cite{V}, \cite{AGV}, \cite{CKN}. The {\it oscillation index\one} $\be(f)$ of $f$ at $0\ins X$ is by definition the maximal number $\be$ such that for any neighborhood $V$ of 0 in $U$, there is a $C^{\infty}$ function $\vp$ compactly supported in $V$ with $C_{\be,k}(\vp)\nes0$ for some $k$.
\sk
On the other hand, we have the {\it real log canonical threshold\one} ${\rm rlct}(f)$ at 0 for a real analytic function $f$ defined on $U$, which can be determined also by using a resolution of singularities, see \cite{rlct}. Indeed, let $\pi\col\Ut\to U$ be a resolution of singularities for $f$, which is obtained by repeating smooth center blowups, and such that $\pi^*\!f$ and its Jacobian determinant are monomials up to multiplication by unit for some local coordinates at each point of $\pi^{-1}f^{-1}(0)$, see \cite{H}. Let $D_j$ ($j\ins J$) be the irreducible components of $D\defs\pi^{-1}f^{-1}(0)$, which we may assume to be smooth. Let $m_j,k_j$ be the multiplicities of $\pi^*\!f$ and $\pi^*\ddd\xb$ at general points of $D_j$. Set
\begin{equation} \label{3}
\ga(f)\defs\min_{j\ins J}\bl\{\tfrac{k_j{+}1}{m_j}\br\}.
\end{equation}
Note that $\ga(f)\less1$ if $f^{-1}(0)$ is $(n{-}1)$-dimensional, and all the $D_j$ for $j\ins J$ are exceptional divisors if $\dim f^{-1}(0)\slt n{-}1$. We have the following
\begin{ithm}[\cite{rlct}] \label{T1}
There is an equality
\begin{equation} \label{4}
{\rm rlct}(f)=\ga(f).
\end{equation}
\end{ithm}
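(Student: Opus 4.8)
Recall that ${\rm rlct}(f)$ is characterized intrinsically: it is the supremum of the $c\sgt0$ for which $|f|^{-c}$ is Lebesgue integrable on some neighborhood of $0$ (equivalently, $-{\rm rlct}(f)$ is the largest pole of the meromorphic continuation of the local zeta function $s\mapsto\int_U|f|^s\,\vp\,\ddd\xb$, taken with $\vp\gess0$ and $\vp(0)\sgt0$). The plan is to pull this integrability condition back through the given resolution $\pi$ and reduce it to an elementary monomial estimate, which will produce exactly $\ga(f)$.

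First I would shrink $U$ to a sufficiently small ball, so that $\overline U$ is contained in the original domain and every component $D_j$ of $D$ meets $\pi^{-1}(0)$ (possible because the analytic loci blown up in the resolution have locally finitely many components, so a small ball meets only those through $0$). Pick $\vp\ins C^\infty(U)$, compactly supported, with $\vp\gess0$ and $\vp\equiv1$ near $0$. Since $\pi$ is proper and restricts to an isomorphism over the complement of a set of measure zero, the change-of-variables formula gives, for every $c\sgt0$,
\[
\int_U|f|^{-c}\,\vp\,\ddd\xb\,=\,\int_{\Ut}|\pi^*\!f|^{-c}\,(\vp{\circ}\pi)\,\bl|\pi^*\ddd\xb\br|.
\]
Moreover $\pi^{-1}({\rm supp}\,\vp)$, a compact neighborhood of $\pi^{-1}(0)$, is covered by finitely many of the coordinate charts furnished by the hypothesis. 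On such a chart with coordinates $\yb\eq(y_1,\dots,y_n)$ one has $\pi^*\!f\eq u\prod_i y_i^{m_i}$ and $\pi^*\ddd\xb\eq v\bl(\prod_i y_i^{k_i}\br)\ddd\yb$ with $u,v$ units, where $m_i\eq k_i\eq0$ if $\{y_i\eq0\}$ is not a component of $D$ and $m_i\gess1$ if it is (since $\pi^*\!f$ vanishes on each $D_j$); so the integrand is bounded above and below by a positive constant times $\prod_i|y_i|^{k_i-cm_i}$. Using that $\int_{[-1,1]^n}\prod_i|y_i|^{a_i}\,\ddd\yb$ is finite exactly when $a_i\sgt-1$ for all $i$, such a chart contributes a finite integral iff $c\slt\tfrac{k_i{+}1}{m_i}$ for every $i$ with $m_i\sgt0$ occurring on it. Taking the minimum over the finitely many charts, equivalently over all $j\ins J$ by the choice of $U$, the right-hand side above is finite for $c\slt\ga(f)$ and divergent at $c\eq\ga(f)$, because on a chart realizing the minimum the exponent $k_{j_0}-cm_{j_0}$ equals $-1$ there. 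Hence ${\rm rlct}(f)\eq\ga(f)$.

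The monomial integrability criterion and the reduction to finitely many charts are routine. I expect the delicate step to be the change of variables combined with the localization at $0$: one must ensure that the component $D_{j_0}$ achieving $\ga(f)$ genuinely meets $\pi^{-1}(0)$, so that the cutoff $\vp{\circ}\pi$ does not annihilate its contribution and the divergence at $c\eq\ga(f)$ really takes place near $0$. This is exactly why $U$ is shrunk at the outset; without such a shrinking the stated equality can fail, for instance when $f^{-1}(0)$ has a high-multiplicity component, or a worse singularity, located away from $0$. Independence of the chosen resolution is then automatic, since the left-hand side ${\rm rlct}(f)$ makes no reference to $\pi$.
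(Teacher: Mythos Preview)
The paper does not actually prove Theorem~\ref{T1}; it is quoted from \cite{rlct} and used only as an input for the later arguments (in particular for \eqref{3.2}). So there is no in-paper proof to compare against.

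That said, your argument is correct and is essentially the standard proof of this fact: pull the local integrability of $|f|^{-c}$ back through the log resolution, reduce on each coordinate chart to the elementary criterion that $\int_{[-1,1]^n}\prod_i|y_i|^{a_i}\,\ddd\yb$ is finite iff every $a_i\sgt-1$, and read off the threshold $\min_j(k_j{+}1)/m_j$. Your care in shrinking $U$ so that each surviving $D_j$ meets $\pi^{-1}(0)$, hence the support of $\vp\circ\pi$, is exactly what is needed to make the divergence at $c\eq\ga(f)$ genuinely occur near the origin and to match the intended local meaning of the minimum in \eqref{3}. One cosmetic point: the clause ``$k_i\eq0$ whenever $\{y_i\eq0\}$ is not a component of $D$'' would require knowing that every exceptional divisor of $\pi$ lies over $f^{-1}(0)$, which the paper does not assume. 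It is also unnecessary: for such $i$ one has $m_i\eq0$, so the factor $|y_i|^{k_i}$ is bounded on the chart and affects neither the finiteness for $c\slt\ga(f)$ nor the divergence at $c\eq\ga(f)$.
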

Note that the real log canonical threshold can be greater than 1 and also than the log canonical threshold ${\rm lct}(f_{\C})$ in the case $\dim f^{-1}(0)\slt n{-}1$, where $f_{\C}$ is the complexification of $f$ obtained by the inclusion $\R\{\xb\}\into\C\{\xb\}$. 
\sk
As for the oscillation index, we have the following.
\begin{ithm}[\cite{V}] \label{T2}
There is an inequality
\begin{equation} \label{5}
\be(f)\less{-}\ga(f).
\end{equation}
\end{ithm}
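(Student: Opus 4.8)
The plan is to reduce the bound to an explicit computation on a resolution of singularities, exactly the one used to define $\ga(f)$. Let $\pi\col\Ut\to U$ be the resolution as above, with exceptional (and strict-transform) components $D_j$ ($j\ins J$), multiplicities $m_j$ of $\pi^*\!f$ and $k_j$ of $\pi^*\ddd\xb$. Given $\vp$ compactly supported near $0$, I would pull back the oscillatory integral \eqref{1} by $\pi$ and use a partition of unity on $\pi^{-1}(\mathrm{supp}\,\vp)$ subordinate to coordinate charts in which $\pi^*\!f = u\cdot y^{\mathbf m}$ (unit times monomial) and $\pi^*\ddd\xb = v\cdot y^{\mathbf k}\,\ddd\yb$. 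This converts $I(\tau,\vp)$ into a finite sum of local integrals of the form $\int e^{\sqrt{-1}\,\tau\, u(y)\,y^{\mathbf m}}\,\psi(y)\,y^{\mathbf k}\,\ddd\yb$ with $\psi$ smooth compactly supported.

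Next I would recall the one-variable (and then monomial) estimate: for $\int_0^\infty e^{\sqrt{-1}\,\tau t^{m}}\,t^{k}\,\chi(t)\,\ddd t$ with $\chi$ smooth, substituting $s=t^m$ shows the leading asymptotic term is of order $\tau^{-(k+1)/m}$ (possibly times a power of $\log\tau$ when several factors contribute the same exponent), with no term of order $\tau^{\al}$ for $\al \sgt -(k+1)/m$; integration by parts handles the tail where $u y^{\mathbf m}$ is bounded away from $0$ and contributes only rapidly decaying terms. Taking the product over the coordinates in a chart, the chart centered at a point lying on $D_{j_1}\cap\cdots\cap D_{j_r}$ contributes no exponent exceeding $-\min_s (k_{j_s}+1)/m_{j_s} \less -\ga(f)$. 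Summing the finitely many charts, every exponent $\al$ with $C_{\al,k}(\vp)\nes0$ satisfies $\al\less-\ga(f)$, which is \eqref{5}.

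The delicate point is the presence of the unit $u(y)$ multiplying the monomial $y^{\mathbf m}$ in $\pi^*\!f$: one cannot simply change variables to absorb it while keeping the other factor $y^{\mathbf k}\ddd\yb$ monomial in all charts simultaneously. The standard remedy is to treat the phase $\tau\, u(y)\, y^{\mathbf m}$ by a further localization (stationary phase / non-stationary phase away from the zero set, and a careful one-variable analysis in the direction of each $y_i$ with $m_i\nes0$, freezing the remaining variables), showing that the extra smooth factor $u(y)$ does not improve—only possibly worsens by log factors, which is harmless for an upper bound on $\al$—the order of growth. This is the step I expect to be the main obstacle, and it is where the hypothesis that $\pi$ is obtained by \emph{smooth} blowups (so that $D$ is a simple normal crossing divisor and the local models are genuinely monomial) is essential.

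Finally, I would note that \eqref{5} is an inequality, not an equality: the contributions from different components $D_j$ achieving the minimum in \eqref{3}, or from a single component, may cancel for every choice of $\vp$, so that the oscillation index $\be(f)$ can be strictly smaller in absolute value than $\ga(f) = {\rm rlct}(f)$; pinning down when this cancellation occurs is precisely the subtlety advertised in the title and is taken up in the rest of the paper.
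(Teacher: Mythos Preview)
The paper does not contain its own proof of Theorem~\ref{T2}: the statement is attributed to Varchenko~\cite{V} (with a pointer also to \cite{CKN} for the normal crossing case), and the authors use it as input rather than reproving it. So there is nothing in the paper to compare your proposal to line by line.

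That said, your outline is exactly the classical route (Varchenko, and also Malgrange \cite{M}, \cite{AGV}): pull back to the resolution, localize by a partition of unity into charts where $\pi^*f$ and $\pi^*\ddd\xb$ are monomial up to unit, and reduce to the asymptotics of model integrals with monomial phase. You correctly identify the one genuinely nontrivial step, namely handling the invertible factor $u(y)$ in the phase; the standard fix is to absorb $u$ into one of the coordinates $y_i$ with $m_i\nes0$ via $y_i\mapsto y_i\,|u|^{-1/m_i}$ (separating signs), after which the phase is purely monomial and the amplitude remains smooth times $y^{\mathbf k}$. Your chart-by-chart bound $-\min_s(k_{j_s}{+}1)/m_{j_s}\les-\ga(f)$ is then the right conclusion. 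So as a sketch of the classical argument your proposal is sound.

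One small slip in your closing paragraph: since $\be(f)\les-\ga(f)<0$, strict inequality $\be(f)<-\ga(f)$ means $|\be(f)|$ is strictly \emph{larger} than $\ga(f)$, not smaller. This does not affect the proof of \eqref{5} itself.
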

Note that the normal crossing case is a very special case of a Newton-nondegenerate case, see also \cite{CKN}. It seems to be believed by some people that the equality holds in \eqref{5} at least in the Newton-$\R$-nondegenerate case with $\ga(f)\slt1$, see for instance \cite{AGV}, \cite{V}.
\sk
In this note we show the following.
\begin{ithm} \label{T3}
Let $f$ be a real homogeneous polynomial which is Newton-$\R$-nondegenerate and convenient.
\skn
{\rm(i)} If $n$ is even and either $d\defs\deg f\sgt n$ or $f^{-1}(0)\eq\{0\}$ $($in particular, $d$ is even$)$, then
\begin{equation} \label{6}
\be(f)<-\ga(f)=-\tfrac{n}{d}.
\end{equation}
\skn
{\rm(ii)} If $n$ is odd and $f^{-1}(0)\eq\{0\}$ $($in particular, $d$ is even$)$, then
\begin{equation} \label{7}
\be(f)=-\ga(f)=-\tfrac{n}{d}.
\end{equation}
\end{ithm}

Note that if $f$ is a real homogeneous polynomial and $d$ is {\it odd,} then $\dim f^{-1}(0)\eq n{-}1$, see Remark\,\,\ref{R3.1} below.
Theorem\,\,\ref{T3} does not seem compatible with some well-known formula in the literature, see for instance \cite{V}, \cite{CKN}. So there must be some error somewhere, although it does not seem easy to detect it inside this paper.
\sk
In Section~1, we review some basics of (real) jumping numbers and (real) log canonical thresholds. In Section~2 we recall some theorem from the theory of oscillatory integrals. In Section~3 we prove Theorem~\ref{T3} using the assertions in the previous section.

\tableofcontents
\numberwithin{equation}{section}

\section{Log canonical thresholds} \label{S1}
\nin
In this section we review some basics of (real) jumping numbers and (real) log canonical thresholds.
\sk
Let $f\ins\R\{\xb\}\eq\R\{x_1,\dots,x_n\}$. This real analytic function is defined on a small open subset $U$ of $X\defs\R^n$. We have the {\it real multiplier ideals\one} $\J(U,f^{\al})\sst\OO_U$ for $\al\ins\R_{\ges0}$, which is defined by the {\it local integrability\one} of $|g|/|f|^{\al}$ for $g\in\OO_U$.
(Here coherence of $\J(U,f^{\al})$ is unclear.) These are weakly decreasing for $\al$, and
coincide with $\OO_U$ for $0\less\al\,{\ll}\,1$. It is known that there is an increasing sequence of positive rational numbers $\al_k$ ($k\ins\Z_{>0}$), called the {\it real jumping numbers\one} of $f$, such that
\begin{equation} \label{1.1}
\J(U,f^{\be})=\J(U,f^{\al_k})\ne\J(U,f^{\al_{k+1}})\q\h{if}\,\,\,\be\ins[\al_k,\al_{k+1}),\,k\ins\N,
\end{equation}
where $\al_0\eq0$. The minimal real jumping number is called the {\it real log canonical threshold,} and is denoted by ${\rm rlct}(f)$, see \cite{rlct} (where $\R\{\!\{x\}\!\}$ means $\R\{\xb\}$).

\begin{rem} \label{R1.1}
We do {\it not\one} necessarily have
\begin{equation} \label{1.2}
f\J(U,f^{\al})=\J(U,f^{\al+1})\q\h{for}\,\,\,\al\sgt0,
\end{equation}
in the exceptional type case. Here we say that $f$ is {\it exceptional type} if ${\rm codim}_U\,D\sgt1$ with $D\defs f^{-1}(0)\sst U$, and {\it ordinary type\one} otherwise.
\end{rem}

\begin{rem} \label{R1.2}
The above equality \eqref{1.2} always holds for the complexification $f_{\C}$ of $f$, which is defined on a sufficiently small open neighborhood $U_{\C}$ of 0 in $\C^n$ with $U_{\C}\cap\R^n\eq U$ using the inclusion $\R\{\xb\}\into\C\{\xb\}$. The multiplier ideals $\J(U_{\C},f_{\C}^{\,\al})$ can be defined by using the local integrability of $|g|^2/|f_{\C}|^{2\al}$ for $g\ins\OO_{U_{\C}}$, and they are closely related to the $V$-filtration of Kashiwara and Malgrange {\it indexed by\one} $\Q$, see for instance \cite{BS}.
\end{rem}

\begin{rem} \label{R1.3}
The minimal jumping number is called the {\it log canonical threshold,} which is denoted by ${\rm lct}(f_{\C})$. This coincides with the minimal spectral number in the isolated singularity case if the latter is at most 1, see for instance \cite{JKSY}. We have an inequality
\begin{equation} \label{1.3}
{\rm lct}(f_{\C})\les{\rm rlct}(f),
\end{equation}
where the equality does not necessarily hold, see \cite{rlct} (and also Remark\,\,\ref{R3.3} below).
For the relation to the maximal root of Bernstein-Sato polynomial up to sign, see \cite{K} (and \cite[Remark 1.2 (iii)]{JKSY}).
\end{rem}

\section{Oscillatory integrals} \label{S2}
\nin
In this section we recall some theorem from the theory of oscillatory integrals.
\sk
For $f,\vp$ as in the introduction, let $\Gap(f),\Gap(\vp)$ be their {\it Newton polytopes\one} defined by using their Taylor expansions with coordinates $x_1,\dots,x_n$ fixed; for instance $\Gap(f)$ is the convex hull of the union of $\nu\pl\R_{\ges0}^{\,n}\sst\R_{\ges0}^{\,n}$ for $\nu\ins\N^n$ with $a_{\nu}\nes0$, where the $a_{\nu}$ are the coefficients of the Taylor expansion $\msum_{\nu\in\N^n}\,a_{\nu}\xb^{\nu}$ of $f$ with $\xb^{\nu}\defs\mprod_{i=1}^n\,x_i^{\nu_i}$.
\sk
A real analytic function $f$ is called {\it Newton-$\R$-nondegenerate\one} if for any compact face $\si$ of $\Gap(f)$, the intersection of $\{\dd_{x_i}f_{\si}\eq0\}\sst\R^n$ for $i\ins[1,n]$ is contained in $\{x_1\cdots x_n\eq0\}$, where $f_{\si}\defs\msum_{\nu\in\si}\,a_{\nu}\xb^{\nu}$ is the partial sum of the Taylor expansion of $f$ over $\si\cap\N^n$. We define $\C$-nondegeneracy considering the zero-locus in $\C^n$ instead of $\R^n$. We say that $f$ is {\it convenient\one} if the Newton polytope $\Gap(f)$ intersects every coordinate axis of $\R^n$, see \cite{V}, \cite{CKN}.
\sk
Assume $\Gap(f),\Gap(\vp)$ are non-empty, or equivalently, their Taylor expansions are nonzero. Set
\begin{equation*}
d(f,\vp):=\min\bl\{d\ins\R_{>0}\mid d\,{\cdot}\,(\Gap(\vp){+}\bo)\sst\Gap(f)\br\},
\end{equation*}
where $\bo\defs(1,\dots,1)\ins\R^n$. Set $(\R_{\ges0}^{\,n})^{\ges r}\defs\{\nu\ins\R_{\ges0}^{\,n}\mid|\nu|\gess r\}$ with $|\nu|\defs\msum_{i=1}^n\nu_i$.
\sk
If $f$ is convenient so that
\begin{equation*}
\Gap(f)\supset(\R_{\ges0}^{\,n})^{\ges r},\q\Gap(\vp)\subset(\R_{\ges0}^{\,n})^{\ges r'},
\end{equation*}
for some $r,r'\ins\R_{>0}$, then we have
\begin{equation} \label{2.1}
d(f,\vp)\les\frac{r}{r'{+}\,n}.
\end{equation}
Note that $r\eq d$ if $f$ is a convenient homogeneous polynomial of degree $d$.
\sk
Let $\be(f,\vp)$ be the maximal number $\be$ such that $C_{\be,k}(\vp)\nes0$ for some $k$ in \eqref{1}, see \cite{CKN}. We have the following.

\begin{thm}[{\cite[Theorem 2.2]{CKN}}] \label{T2.1}
Assume $f$ is convenient, Newton-$\R$-nondegenerate, and $\Gap(\vp)\nes\emptyset$. Let $r,r'$ be as in {\rm\eqref{2.1}}. Then we have the inequalities
\begin{equation} \label{2.2}
\be(f,\vp)\les-1/d(f,\vp)\les-\frac{r'{+}\one n}{r}.
\end{equation}
\end{thm}

We say that a compactly supported $C^{\infty}$ function $\chi$ on $U$ is a {\it cutoff function\one} if the values of $\chi$ are nonnegative and coincide with 1 on a neighborhood of 0 in $U$. (The support of $\vp$ is the closure of the nonzero locus in $U$.) Theorem\,\,\ref{T2.1} implies the following.

\begin{cor} \label{C2.1}
Assume $f$ is Newton-$\R$-nondegenerate and convenient. Then the asymptotic expansion of $I(\tau,\xb^{\nu}\chi)$ is independent of the choice of a cutoff function $\chi$ for any $\nu\ins\N^n$.
\end{cor}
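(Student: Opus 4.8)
The plan is to pass to the difference of two normalizations and to prove that \emph{every} coefficient $C_{\al,k}$ attached to it vanishes, using only the linearity of the assignment $\vp\mapsto C_{\al,k}(\vp)$ together with the upper bound of Theorem~\ref{T2.1} applied to a suitable auxiliary amplitude. Fix $\nu\ins\N^n$ and let $\chi_1,\chi_2$ be cutoff functions. Since each of them equals $1$ on a neighborhood of $0$, the function $\vp\defs\xb^\nu(\chi_1{-}\chi_2)$ is a $C^\infty$ function compactly supported in $U$ which vanishes identically near $0$; in particular its Taylor expansion at $0$ is zero, so $\Gap(\vp)\eq\emptyset$. By linearity of the integral \eqref{1} in its amplitude we have $C_{\al,k}(\xb^\nu\chi_1)\mi C_{\al,k}(\xb^\nu\chi_2)\eq C_{\al,k}(\vp)$ for all $\al,k$, so it suffices to show that $C_{\al,k}(\vp)\eq0$ for every $\al\slt0$ and every $k$.

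To obtain this I would thicken $\vp$ by a high power of $\xb^{\bo}\defs x_1\cdots x_n$ so as to make Theorem~\ref{T2.1} applicable. Pick any cutoff function $\chi$ and, for $M\ins\N$, set $\vp_M\defs\vp+\xb^{M\bo}\chi$, which is again $C^\infty$ and compactly supported in $U$. As $\vp$ is flat at $0$ while $\chi\eq1$ near $0$, the Taylor expansion of $\vp_M$ at $0$ is the single monomial $\xb^{M\bo}$, so $\Gap(\vp_M)\eq M\bo{+}\R_{\ges0}^{\,n}$, and likewise $\Gap(\xb^{M\bo}\chi)\eq M\bo{+}\R_{\ges0}^{\,n}$. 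Both of these are contained in $(\R_{\ges0}^{\,n})^{\ges nM}$, and since $f$ is convenient we have $\Gap(f)\supset(\R_{\ges0}^{\,n})^{\ges r}$ for some $r\sgt0$ (one may take $r$ to be the largest coordinate-axis intercept of $\Gap(f)$). Applying Theorem~\ref{T2.1} with $r'\eq nM$ to the amplitudes $\vp_M$ and $\xb^{M\bo}\chi$ then gives
\[
\be(f,\vp_M)\les-(nM{+}n)/r,\q\q\be(f,\xb^{M\bo}\chi)\les-(nM{+}n)/r.
\]

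Now fix $\al_0\slt0$ and $k_0$, and choose $M$ so large that $-(nM{+}n)/r\slt\al_0$; this is possible because the right-hand side tends to $-\infty$ with $M$. By the very definition of $\be(f,\cdot)$ as the largest exponent carrying a nonzero coefficient, the two displayed inequalities force $C_{\al_0,k}(\vp_M)\eq C_{\al_0,k}(\xb^{M\bo}\chi)\eq0$ for all $k$, whence, subtracting and using linearity once more,
\[
C_{\al_0,k_0}(\vp)\eq C_{\al_0,k_0}(\vp_M)\mi C_{\al_0,k_0}(\xb^{M\bo}\chi)\eq0.
\]
Since $\al_0\slt0$ and $k_0$ were arbitrary, $C_{\al,k}(\vp)\eq0$ for all $\al\slt0$ and all $k$, and therefore $C_{\al,k}(\xb^\nu\chi_1)\eq C_{\al,k}(\xb^\nu\chi_2)$ for all $\al,k$, which is the assertion.

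The one point deserving care is that Theorem~\ref{T2.1} genuinely applies to the auxiliary amplitudes $\vp_M$ and $\xb^{M\bo}\chi$; this is however immediate, since its only hypotheses are that $f$ be convenient and Newton $\R$-nondegenerate --- both standing assumptions --- together with non-emptiness of the Newton polytope of the amplitude, which holds by construction as $M\bo\ins\Gap(\vp_M)$. I expect no analytic obstacle; the only bookkeeping is the order of quantifiers, namely that $M$ is allowed to depend on the pair $(\al_0,k_0)$. The whole content of the argument is that linearity in the amplitude, combined with the fact that Theorem~\ref{T2.1} pushes the top of the asymptotic expansion arbitrarily far toward $-\infty$ as the amplitude is made more and more flat at $0$, leaves no room for a nonzero coefficient attached to a function that vanishes near $0$.
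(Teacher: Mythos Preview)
Your proof is correct and follows essentially the same approach as the paper: you perturb the flat-at-$0$ difference $\xb^{\nu}(\chi_1{-}\chi_2)$ by a high-order amplitude with nonempty Newton polytope, apply Theorem\,\ref{T2.1} to both the perturbed amplitude and the perturbation, and subtract. The paper's proof is terser and uses generic auxiliary functions $\vp_k$ with $\Gap(\vp_k)\sst(\R_{\ges0}^{\,n})^{\ges k}$ rather than your explicit $\xb^{M\bo}\chi$, but the mechanism is identical.
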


\begin{proof}
Let $\vp_k$ be compactly supported $C^{\infty}$ functions on $U$ such that $(\R_{\ges0}^{\,n})^{\ges k}\,{\supset}\,\Gap(\vp_k)\nes\emptyset$ for $k\,{\gg}\,0$. This corollary can be verified by applying Theorem\,\,\ref{T2.1} to the difference between $\xb^{\nu}\chi\pm\vp_k$ and $\xb^{\nu}\chi$, and also to the one between $\xb^{\nu}\chi\pl\vp_k$ and $\xb^{\nu}\chi'\mi\vp_k$ for two cutoff functions $\chi$ and $\chi'$, for any $k\,{\gg}\,0$.
\end{proof}

\begin{cor} \label{C2.2}
Under the hypothesis of Corollary\,\,{\rm\ref{C2.1}}, the asymptotic expansion of $I(\tau,\vp)$ depends only on the Taylor expansion $\msum_{\nu\in\N^n}\,c_{\nu}\xb^{\nu}$ of $\vp$.
\end{cor}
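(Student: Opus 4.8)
The plan is to reduce Corollary~\ref{C2.2} to Corollary~\ref{C2.1} by a Taylor-truncation argument, exactly parallel to the proof of Corollary~\ref{C2.1} itself. Write the Taylor expansion of $\vp$ as $\msum_{\nu\in\N^n}c_{\nu}\xb^{\nu}$, and fix a cutoff function $\chi$. For each $N\gg0$ set $\vp_N\defs\chi\cdot\msum_{|\nu|<N}c_{\nu}\xb^{\nu}$, a finite $\R$-linear combination of the functions $\xb^{\nu}\chi$ treated in Corollary~\ref{C2.1}. Then the remainder $\vp-\vp_N$ is a compactly supported $C^{\infty}$ function whose Taylor expansion lies in $(\R_{\ges0}^{\,n})^{\ges N}$, i.e. $(\R_{\ges0}^{\,n})^{\ges N}\supset\Gap(\vp-\vp_N)$, provided this remainder is not flat (if it is flat, its contribution to the asymptotic expansion is negligible and there is nothing to prove).

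Next I would apply Theorem~\ref{T2.1} to $f$ together with the test function $\vp-\vp_N$: since $f$ is convenient and homogeneous-type we have $\Gap(f)\supset(\R_{\ges0}^{\,n})^{\ges r}$ for some $r>0$, and $r'\eq N$ works in \eqref{2.1}, so
\begin{equation*}
\be(f,\vp-\vp_N)\les-\frac{N+n}{r}.
\end{equation*}
Hence for $N\gg0$ the top exponent contributed by $\vp-\vp_N$ to the asymptotic expansion \eqref{2} is below any fixed threshold; in particular, for each fixed pair $(\al,k)$ the coefficient $C_{\al,k}(\vp-\vp_N)$ vanishes once $N$ is large enough. By linearity of $\vp\mapsto I(\tau,\vp)$ and of each $C_{\al,k}$, we get $C_{\al,k}(\vp)\eq C_{\al,k}(\vp_N)$ for all large $N$; and $C_{\al,k}(\vp_N)$ depends only on the coefficients $c_{\nu}$ with $|\nu|<N$ by Corollary~\ref{C2.1} (linearity again, plus the $\chi$-independence proved there). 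Since the pair $(\al,k)$ was arbitrary and for each such pair we may pass to the $N\to\infty$ limit, the entire asymptotic expansion of $I(\tau,\vp)$ is determined by the Taylor coefficients $c_{\nu}$, which is the claim.

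The only genuine point to be careful about is the uniformity in $(\al,k)$: a priori the asymptotic expansion \eqref{2} ranges over a discrete but infinite set of exponents $\al<0$, so one must phrase the conclusion correctly — namely that \emph{each individual} coefficient $C_{\al,k}(\vp)$ is determined by finitely many $c_{\nu}$, rather than claiming a single $N$ works for all exponents at once. This is harmless because the statement ``depends only on the Taylor expansion'' is naturally interpreted coefficient by coefficient; no single finite truncation of $\vp$ is asserted to reproduce the whole expansion. The homogeneity hypothesis enters only through $r\eq d$ in \eqref{2.1} (via the remark after \eqref{2.1}), but in fact any convenient $f$ suffices, so I would state the proof for convenient $f$ and note that homogeneity is not needed here.

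\begin{proof}
Fix a cutoff function $\chi$ and write the Taylor expansion of $\vp$ as $\msum_{\nu\in\N^n}c_{\nu}\xb^{\nu}$. For $N\gg0$ put $\vp_N\defs\chi\cdot\msum_{|\nu|<N}c_{\nu}\xb^{\nu}$, a finite $\R$-linear combination of functions $\xb^{\nu}\chi$. If $\vp-\vp_N$ is flat at $0$ then its oscillatory integral is rapidly decreasing and contributes nothing to \eqref{2}; otherwise $(\R_{\ges0}^{\,n})^{\ges N}\supset\Gap(\vp-\vp_N)\nes\emptyset$, so Theorem\,\,\ref{T2.1} applied to $f$ and $\vp-\vp_N$ (with $r$ as in \eqref{2.1} for the convenient function $f$, and $r'\eq N$) gives $\be(f,\vp-\vp_N)\les-(N{+}n)/r$. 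Hence, for any fixed $\al<0$ and any $k$, we have $C_{\al,k}(\vp-\vp_N)\eq0$ once $N\sgt-\al\one r-n$. By linearity of $\vp\mapsto C_{\al,k}(\vp)$ this yields $C_{\al,k}(\vp)\eq C_{\al,k}(\vp_N)$ for such $N$, and $C_{\al,k}(\vp_N)$ depends only on the $c_{\nu}$ with $|\nu|<N$ by Corollary\,\,\ref{C2.1} and linearity. As $\al,k$ were arbitrary, the asymptotic expansion of $I(\tau,\vp)$ depends only on the Taylor coefficients $c_{\nu}$.
\end{proof}
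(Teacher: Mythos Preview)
Your argument is correct and follows essentially the same approach as the paper: truncate the Taylor series, multiply by a cutoff $\chi$, and apply Theorem~\ref{T2.1} to the remainder $\vp-\msum_{|\nu|<N}c_{\nu}\xb^{\nu}\chi$, then invoke Corollary~\ref{C2.1} for the truncated part. Your treatment is in fact more careful than the paper's one-line proof, since you explicitly address the flat-remainder case and spell out the coefficient-by-coefficient nature of the conclusion.
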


\begin{proof}
This follows from Corollary\,\,\ref{C2.1} applying Theorem\,\,\ref{T2.1} to $\vp-\msum_{|\nu|\les k}\,c_{\nu}\xb^{\nu}\chi$ for any $k\gg 0$.
(Note that the Taylor expansion of $\vp$ is not necessarily a convergent power series.)
\end{proof}

\section{Proof of the main theorem} \label{S3}
\nin
In this section we prove Theorem~\ref{T3} using the assertions in the previous section.

\begin{proof}[Proof of Theorem~{\rm\ref{T3} (i)}]
Let $\pi\col\Xt\tos X$ be the blowup at $p\ins X$. The embedded resolution of singularities is given by this. Set $E\defs\pi^{-1}(0)$. For $i\ins[1,n]$, there is a coordinate system $\yb^{(i)}\eq(y^{(i)}_1,\dots,y^{(i)}_n)$ of the complement $V_i$ of the proper transform of $x_i^{-1}(0)$ in $\Xt$ such that
\begin{equation} \label{3.1}
\aligned&y^{(i)}_i\eq\pi^*x_i,\q y^{(i)}_j\eq\pi^*(x_j/x_i)\,\,\,(j\nes i),\\
&\pi^*\!f\eq(y^{(i)}_i)^d\one h_i(\widehat{\yb}^{(i)}),\q\pi^*\ddd\xb\eq(y^{(i)}_i)^{n-1}\one\ddd\yb^{(i)},\endaligned
\end{equation}
with $h_i(\widehat{\yb}^{(i)})\defs f|_{x_i=1}$. Here $x_j$ for $j\nes i$ is identified with $y^{(i)}_j$ in the definition of $h_i(\widehat{\yb}^{(i)})$, and $\widehat{\yb}^{(i)}\defs(y^{(i)}_1,\dots,y^{(i)}_{i-1},y^{(i)}_{i+1},\dots,y^{(i)}_n)$ for $i\ins[2,n{-}1]$ (similarly for $i\eq1$ or $n$). We then get
\begin{equation} \label{3.2}
{\rm rlct}(f)\eq\tfrac{n}{d},
\end{equation}
by Theorem\,\,\ref{T1}.
\sk
Let $E_i$ be the complement in $E$ of the intersection of $E$ with the proper transform of $x_i^{-1}(0)$ so that the $\widehat{\yb}^{(i)}$ are coordinates of $E_i$. We have the decomposition
\begin{equation} \label{3.3}
V_i=E_i{\times}\one\R,
\end{equation}
in a compatible way with the coordinates $\widehat{\yb}^{(i)}$ and $y^{(i)}_i$.
For $i\ins[1,n]$, set
\begin{equation*}
U'_i\defs\bl\{|y^{(i)}_j|\slt1\pl\ep\,\,\,(\forall\one j\nes i)\br\}\subset E_i,
\end{equation*}
for $\ep\ins\R_{>0}$ sufficiently small. This gives an open covering of $E$. Let $\psi_i$ ($i\ins[1,n]$) be a partition of unity on $E$ which is subordinate to the covering $\{U'_i\}$. Let $\eta$ be a cutoff function on $\R$. Here we assume that $\eta$ is an {\it even function} (that is, $\eta(-y^{(i)}_i)\eq\eta(y^{(i)}_i)$). Set
\begin{equation*}
\chit\defs\msum_{i=1}^n\,\eta(y^{(i)}_i)\one\psi_i(\widehat{\yb}^{(i)}).
\end{equation*}
The restriction of $\chit$ to a sufficiently small neighborhood of $E$ is equal to the constant function with value 1. So there is a cutoff function $\chi'$ on $X$ with $\pi^*\chi'\eq\chit$. Applying Theorem\,\,\ref{T2.1} to $\vp\mi\vp(0)\chi'$ (and using a remark after \eqref{2.1}), the assertion is reduced to
\begin{equation} \label{3.4}
\be(f,\chi')<-\tfrac{n}{d}.
\end{equation}
In the $d$ even case it is well sufficient to show for any $i\ins[1,n]$ the vanishing
\begin{equation} \label{3.5}
\int_{E_i}\!\int_{\R}e^{\sqrt{-1}\one\tau y_i^dh_i(\widehat{\yb}^{(i)})}y_i^{n-1}\one\eta(y_i)\one\psi_i(\widehat{\yb}^{(i)})\one\ddd y_i\one\ddd\widehat{\yb}^{(i)}=0,
\end{equation}
by the definition of $\chi'$ (since $f$ is homogeneous). Here $y_i^{(i)}$ is denoted by $y_i$ to simplify the notation. This vanishing is however trivial using the double integral and calculating the integral over $\R$ first, since $d,n$ are even, and $\eta$ is an even function (and the integration of an odd function with compact support on $\R$ vanishes).
Note that this is well compatible with \cite[Section 7.1, (ii)]{CKN}.
\sk
In the $d$ {\it odd\one} case we can apply the above argument to the {\it real\one} part of $I(\tau,\chi')$, where we have the identity
\begin{equation*}
e^{\sqrt{-1}\one\theta}\eq\cos\theta\pl\sqrt{-1}\one\sin\theta\q(\theta\ins\R),
\end{equation*}
with $\cos\theta$, $\sin\theta$ respectively {\it even\one} and {\it odd\one} functions of $\theta$. We can show the vanishing of the first leading terms with respect to $\al$ of the asymptotic expansion of the {\it imaginary\one} part of $I(\tau,\chi')$ (see \eqref{1}--\eqref{2}) using the involution $\iota$ of $X$ defined by $\iota^*x_i\eq{-}x_i$ for any $i\ins[1,n]$. Indeed, we have $\iota^*f\eq{-}f$ and we may assume $\iota^*\chi'\eq\chi'$ replacing $\chi'$ with $(\chi'\pl\iota^*\chi')/2$ and applying Corollary\,\,\ref{C2.1}. (Here $\iota^*\ddd\xb\eq(-1)^n\ddd\xb$, but we do not have to use the condition that $n$ is even, since an {\it orientation\one} is needed to define the integration of a form of highest degree. Similarly a symmetry of $f$ does not imply the vanishing of an oscillatory integral using the action of a permutation of two coordinates.
Note also that the values of each $h_i$ coincide with $\R$ since $d$ is odd, and there might be cancellations among integrals over various $V_i$, if one tries to apply the above argument to the $\sin\theta$ part in order to show the non-vanishing.
It seems quite difficult show the non-vanishing without assuming non-negativity or non-positivity of the values of $f$, that is, $f^{-1}(0)\eq\{0\}$.)
This finishes the proof of Theorem~\ref{T3} (i).
\end{proof}

\begin{proof}[Proof of Theorem~{\rm\ref{T3} (ii)}]
Assume now $n$ is {\it odd\one} and $f^{-1}(0)\eq\{0\}$ (hence $d$ is {\it even\one}). We may assume further that the values of $f$ are {\it non-negative.} By a more precise version of Theorem~\ref{T2} in \cite{V} we have
\begin{equation} \label{3.6}
\lim_{\tau\one\to\one+\infty}\tau^{n/d}I(\tau,\chi')=C_{-n/d,0}(\chi'),
\end{equation}
where $C_{-n/d,0}(\chi')$ is as in the asymptotic expansion \eqref{2} with $\chi'$ as constructed above. Indeed, by the definition of (weak) asymptotic expansion we have
\begin{equation*}
\lim_{\tau\one\to\one+\infty}\tau^{n/d}\bl(I(\tau,\chi')-C_{-n/d,0}(\chi')\one\tau^{-n/d}\one\br)=0.
\end{equation*}
Here the intersection of the multi-diagonal line (containing the origin and $(1,\dots,1)$) with the Newton boundary $\dd\Gap(f)$ is contained in the interior of the unique maximal-dimensional compact face of $\Gap(f)$.
\sk
It is then enough to show the non-vanishing
\begin{equation} \label{3.7}
\lim_{\tau\one\to\one+\infty}\tau^{n/d}I(\tau,\chi')\ne0.
\end{equation}
Setting
\begin{equation*}
I_1\bl(\tau,y^{n-1}\eta(y)\br):=\int_{\R}e^{\sqrt{-1}\one\tau y^d}y^{n-1}\eta(y)\ddd y,
\end{equation*}
the left-hand side of \eqref{3.5} is equal to
\begin{equation} \label{3.8}
\int_{E_i}I_1\bl(h_i(\widehat{\yb}^{(i)})\tau,y_i^{n-1}\eta(y_i)\br)\psi_i(\widehat{\yb}^{(i)})\one\ddd\widehat{\yb}^{(i)}.
\end{equation}
By \cite[7.1 (i)]{CKN} we have
\begin{equation} \label{3.9}
\lim_{\tau\one\to\one+\infty}\tau^{n/d}I_1\bl(\tau,\chi'y^{n-1}\eta(y)\br)=C'_{-n/d,0}\ne0.
\end{equation}
Set $A\defs C'_{-n/d,0}\ins\C^*$. For any $B\ins(0,1)$, there is a positive real number $R$ such that
\begin{equation} \label{3.10}
\Re\bl(A^{-1}I_1\bl(\tau,y^{n-1}\eta(y)\br)\br)>B\one\tau^{-n/d}\q\q\h{if}\,\,\,\,\tau>R,
\end{equation}
where $\Re(z)$ denotes the real part of a complex number $z$. Since $f(X\stm\{0\})\eq\R_{>0}$ and $h_i\eq f|_{x_i=1}$, there are positive real numbers $M,N$ satisfying
\begin{equation} \label{3.11}
0<M\les h_i(\widehat{\yb}^{(i)})\les N\q\q\h{for any}\,\,\,\,\widehat{\yb}^{(i)}\in\overline{U'_i}.
\end{equation}
For each $i\ins[1,n]$, these imply that if $\tau>R/M$ (so that $h_i(\widehat{\yb}^{(i)})\tau\sgt R$ for any $\widehat{\yb}^{(i)}\ins\overline{U'_i}\,$), we have
\begin{equation} \label{3.12}
\begin{aligned}
&\Re\Bigl(A^{-1}\int_{E_i}I_1\bl(h_i(\widehat{\yb}^{(i)})\tau,y_i^{n-1}\eta(y_i)\br)\psi_i(\widehat{\yb}^{(i)})\one\ddd\widehat{\yb}^{(i)}\Bigr)\\
&>B\int_{E_i}\bl(h_i(\widehat{\yb}^{(i)})\tau\br)^{-n/d}\psi_i(\widehat{\yb}^{(i)})\one\ddd\widehat{\yb}^{(i)}\\&\ges BN^{-n/d}L_i\,\tau^{-n/d}>0,
\end{aligned}
\end{equation}
setting $L_i\defs\int_{E_i}\psi_i(\widehat{\yb}^{(i)})\one\ddd\widehat{\yb}^{(i)}\sgt0$.
So the assertion \eqref{3.7} follows. (One can verify that there is no problem about sign by restricting to the complement of the union of $\overline{U'_j}$ for $j\nes i$, since the values of the $h_i$ are positive.) This finishes the proof of Theorem~\ref{T3} (ii).
\end{proof}

\begin{rem} \label{R3.1}
One could get that $\be(f)\eq{-}\tfrac{n{+}1}{d}$ under the hypotheses of Theorem\,\,\ref{T3} (i) with $f^{-1}(0)\eq\{0\}$ by using \cite[7.1 (i)]{CKN}.
Note that the complement of $f^{-1}(0)$ cannot be connected if $d$ is {\it odd.} (Indeed, the two open subsets $f^{-1}(\R_{>0})$ and $f^{-1}(\R_{<0})$ are both non-empty using the involution $\iota$.)
\end{rem}

\begin{rem} \label{R3.2}
The above argument might be extended to the case $f$ is convenient, Newton-$\R$-nondegenerate, and nonnegative-valued (without assuming that $f$ is homogeneous). Let $\si_j$ ($j\ins J_0$) be the $(n{-}1)$-dimensional faces of $\Gap(f)$ containing the intersection of $\dd\Gap(f)$ with the diagonal spanned by $\bo\ins\R^n$. Let $d_j$ be the LCM of the denominators of the coefficients of the linear function $\ell_j$ on $\R^n$ such that $\si_j\sst\ell_j^{-1}(1)$. Set $r_j\eq d_j\ell_j(\bo)$. (Note that the pullback of a monomial $\xb^{\nu}$ with $d_j\ell_j(\nu)\eq k$ has zero of order $k$ at general point of a divisor corresponding to $\si_j$, and the pullback of a logarithmic form $\xb^{-\bo}\ddd\xb$ is a logarithmic form.)
Assuming $1\sgt r_j/d_j\eq\ell_j(\bo)$ (which is independent of $j\ins J_0$), the equality could hold in \eqref{5} with $\ga(f)\eq r_j/d_j$ if $d_j$ is even and $r_j$ is odd for some $j\ins J_0$. (Without the assumption on the non-negativity of $f$, it does not seem easy to show the {\it non-cancellation\one} among the integrals associated with different faces $\si_j$ or between the integrals over $\{h_{\si}\sgt0\}$ and $\{h_{\si}\slt0\}$.) Here one may have also a problem about the commutativity of asymptotic expansion and integration.
\end{rem}

\begin{rem} \label{R3.3}
There is a big difference between Newton-nondegeneracy over $\R$ and $\C$; for instance in the case $f$ is homogeneous, convenient, and $f^{-1}(0)\eq\{0\}$, the $m${\one}th power $f^m$ is Newton-$\R$-nondegenerate for any positive integer $m$ if $f$ is.
There is also an example like $f\eq(x^2\pl y^2\pl z^2)^2\pl x^6\pl y^6\pl z^6$, which is Newton-nondegenerate over $\R$, but not over $\C$. Here ${\rm rlct}(f)\eq\tfrac{3}{4}$, ${\rm lct}(f_{\C})\eq\tfrac{2}{3}$ (using Remark\,\,\ref{R1.3}), and $\tfrac{3}{4}$ is {\it not\one} a spectral number of $f_{\C}$ according to Singular \cite{Sing} (where the spectral numbers are shifted by $-1$).
\end{rem}


\begin{thebibliography}{JSKY\,22}
\bibitem[AGV\,88]{AGV} Arnold, V.I., Gusein-Zade, S.M., Varchenko, A.N., Singularities of Differentiable Maps II, Birk\-h\"auser, Boston, 1988.
\bibitem[BuSa\,05]{BS} Budur, N., Saito, M., Multiplier ideals, $V$-filtration, and spectrum, J.\ Alg.\ Geom.\ 14 (2005), 269--282.
\bibitem[CKN\,13]{CKN} Cho, K., Kamimoto, J., Nose, T., Asymptotic analysis of oscillatory integrals via the Newton polyhedra of the phase and the amplitude, J. Math. Soc. Japan 65 (2013), 521--562 (or arxiv:1203.0808).
\bibitem[DGPS\,23]{Sing} Decker, W., Greuel, G.-M., Pfister, G., Sch\"onemann, H., {\sc Singular} 4.3.2 --- A computer algebra system for polynomial computations, available at http://www.singular.uni-kl.de (2023).
\bibitem[Hi\,64]{H} Hironaka, H., Resolution of singularities of an algebraic variety over a field of characteristic zero, Ann.\ Math.\ 79 (1964), 109--326.
\bibitem[JKSY\,22]{JKSY} Jung, S.-J., Kim, I.-K., Saito, M., Yoon, Y., Hodge ideals and spectrum of isolated hypersurface singularities, Ann.\ Inst.\ Fourier 72 (2022), 465--510.
\bibitem[Ko\,97]{K} Koll\'ar, J., Singularities of pairs, Proc.\ Symp.\ Pure Math., A.M.S. 62 Part 1, (1997), 221--287.
\bibitem[Ma\,74]{M} Malgrange, B., Int\'egrales asymptotiques et monodromie, Ann.\ Sci.\ Ecole Norm.\ Sup. (4), 7 (1974), 405--430.
\bibitem[Sa\,07]{rlct} Saito, M., On real log canonical thresholds, arxiv:0707.2308.
\bibitem[Va\,76]{V} Varchenko, A.N., Newton polyhedra and estimation of oscillating integrals, Funct.\ Anal.\ Appl. 10 (1976), 175--196.
\end{thebibliography}
\end{document}